\documentclass[12pt]{amsart}

\usepackage{amssymb,verbatim,amscd,amsmath,graphicx,enumerate}
\usepackage[T1]{fontenc}
\usepackage{graphicx}
\usepackage{caption}
\usepackage{subcaption}
\usepackage{pdfsync}
\usepackage{fullpage}
\usepackage{color}
\usepackage{marginnote}
\usepackage{tikz}
\usepackage{bbm}
\usepackage{fancybox}

\pagestyle{plain}

\setlength{\parskip}{0.10cm plus0.3ex minus0.0ex}
\setlength{\parindent}{0.5cm}
\setlength{\footskip}{0.5cm}

\numberwithin{equation}{section}

{\theoremstyle{plain}
    \newtheorem{thm}{\bf Theorem}[section]
    \newtheorem{proposition}[thm]{\bf Proposition}
    \newtheorem{corollary}[thm]{\bf Corollary}
    
}
{\theoremstyle{definition}
    \newtheorem{defn}[thm]{\bf Definition}
    \newtheorem{conjecture}[thm]{\bf Conjecture}
    \newtheorem{problem}[thm]{\bf Problem}
    \newtheorem{question}[thm]{\bf Question}

    \newtheorem{example}[thm]{\bf Example}
    \newtheorem*{ack}{\bf Acknowledgement}
}

\DeclareMathOperator{\reg}{reg}
\DeclareMathOperator{\depth}{depth}

\DeclareMathOperator{\pd}{pd}

\DeclareMathOperator{\pdrSpec}{pdreg}
\newcommand{\mc}{\tau_{\max}}

\newcommand{\ZZ}{{\mathbb Z}}
\newcommand{\NN}{{\mathbb N}}

\def\mm{{\frak m}}

\def\1{{\bf 1}}
\def\0{{\bf 0}}

\begin{document}

\title{MAX MIN vertex cover and the size of Betti tables}

\author[H.T. H\`a]{Huy T\`ai H\`a}
\address{Department of Mathematics, Tulane University, 6823 St. Charles Avenue, New Orleans, LA 70118, USA}
\email{tha@tulane.edu}

\author[T. Hibi]{Takayuki Hibi}
\address{Department of Pure and Applied Mathematics, Graduate School of Information Science and Technology, Osaka University, Suita, Osaka 565-0871, Japan}
\email{hibi@math.sci.osaka-u.ac.jp}

\keywords{\textsc{max min vertex cover}, \textsc{min max independent set}, gap-free graph, chordal graph, regularity, projective dimension, Betti table, monomial ideal, squarefree monomial, edge ideal}
\subjclass[2010]{13D02, 05C70, 05E40}

\begin{abstract}
Let $G$ be a finite simple graph on $n$ vertices, that contains no isolated vertices, and let $I(G) \subseteq S = K[x_1, \dots, x_n]$ be its edge ideal. In this paper, we study the pair of integers that measure the projective dimension and the regularity of $S/I(G)$. We show that if $\pd(S/I(G))$ attains its minimum possible value $2\sqrt{n}-2$ then, with only one exception, $\reg(S/I(G)) = 1$. We also provide a full description of the spectrum of $\pd(S/I(G))$ when $\reg(S/I(G))$ attains its minimum possible value 1.
\end{abstract}

\maketitle

\section{Introduction}
In the current trends of commutative algebra, the role of combinatorics is distinguished.  Particularly, the combinatorics of finite graphs has created fascinating research problems in commutative algebra and, vice-versa, algebraic methods and techniques have shed new lights on graph-theoretic questions (\cite[Chapters 9 and 10]{HHgtm260}).

Let $G$ be a simple graph over the vertex set $V_G = \{x_1, \dots, x_n\}$ and edge set $E_G$. Throughout the paper, all our graphs are assumed to contain no isolated vertices.  Let $K$ be a field and identify the vertices in $V_G$ with the variables in the polynomial ring $S = K[x_1, \ldots, x_n]$.  The \emph{edge ideal} of $G$, first introduced by Villarreal \cite{V}, is defined by
$$I(G) = \langle x_i x_j ~\big|~ \{x_i, x_j\} \in E_G\rangle \subseteq S.$$
Let $\pd(G)$ and $\reg(G)$ denote the \emph{projective dimension} and the Castelnuovo--Mumford \emph{regularity} of $S/I(G)$, respectively.  These are fundamental homological invariants that measure the computational complexity of $S/I(G)$. Particularly, $\pd(G)$ and $\reg(G)$ describe the size of the \emph{graded Betti table} of $S/I(G)$. Our work in this paper is motivated by the following basic question.

\begin{question} \label{question}
	Given a positive integer $n$, for which pairs of integers $(p,r)$, there exists a graph $G$ on $n$ vertices such that $\pd(G) = p$ and $\reg(G) = r$?
\end{question}

Our approach to Question \ref{question} is purely combinatorial in nature. Specifically, we investigate an important graph-theoretic invariant, namely, the maximum size of a minimal vertex cover of $G$, which we shall denote by $\mc(G)$. In graph theory, the two symmetric dual problems, to find a \textsc{max min vertex cover} and to find a \textsc{min max independent set} in a graph, are known to be \textbf{NP}-hard problems and, in recent years, have received a growing attention \cite{BdCP, BdCEP, CHLN, GKL, GL, Hall}. Particular, a result of Costa, Haeusler, Laber and Nogueira \cite[Theorem 2.2]{CHLN} proves that
\begin{align}
\mc(G) \ge 2\sqrt{n}-2. \label{eq.tau}
\end{align}
The inequality (\ref{eq.tau}) gives rise to a somewhat surprising bound, that may have been overlooked, that
$\pd(G) \ge 2\sqrt{n}-2$; see Corollary \ref{cor.pd}.

Our first main result shows that if $\pd(G) = 2\sqrt{n}-2$ then, with only one exception, we must have $\reg(G) = 1$. In fact, since $\pd(G) \ge \mc(G)$ (see, for example, the proof of Corollary \ref{cor.pd}), we prove the following stronger statement. For a positive integer $s$, let $H_s$ denote the graph consisting of a complete subgraph $K_s$ each of whose vertex is connected to a set of $s-1$ independent vertices, and these independent sets are pairwise disjoint (see Figure \ref{figHs}). Let $2K_2$ denote the graph consisting of two disjoint edges and let $C_4$ be the induced $4$-cycle.

\medskip

\noindent\textbf{Theorem \ref{thm.reg}.} Let $G$ be a simple graph on $n$ vertices and suppose that $n$ is a perfect square. If $\mc(G) = 2\sqrt{n}-2$ then we must have either
\begin{enumerate}
	\item $G = 2K_2$ and $(\pd(G), \reg(G)) = (2,2)$, or
	\item $G = C_4$ and $(\pd(G), \reg(G)) = (3,1)$, or
	\item $G = H_s$, for some $s \in \NN$, and $(\pd(G),\reg(G)) = (2\sqrt{n}-2,1)$.
\end{enumerate}

To establish Theorem \ref{thm.reg}, we characterize all graphs $G$ for which $\mc(G) = 2\sqrt{n}-2$, when $n$ is a perfect square. Our classification reads as follows.

\medskip

\noindent\textbf{Theorem \ref{classification}.} Let $G$ be a simple graph on $n$ vertices and suppose that $n$ is a perfect square. Then $\mc(G) = 2\sqrt{n}-2$ if and only if $G$ is either $2K_2$, $C_4$ or $H_s$, for some $s \in \NN$.

\medskip

Theorem \ref{thm.reg} further leads to the following natural special case of Question \ref{question}: what is the spectrum of $\pd(G)$ for all graphs $G$, for which $\reg(G) = 1$? Our next main result answers this question.

\medskip

\noindent\textbf{Theorem \ref{thm.spec}.} Let $n \ge 2$ be any integer. The spectrum of $\pd(G)$ for all graphs $G$, for which $\reg(G) = 1$, is precisely $[2\sqrt{n}-2, n-1] \cap \ZZ$.

\medskip

As an immediate consequence of Theorem \ref{thm.spec}, we show in Corollary \ref{cor.spec} that for any pairs of positive integers $(p,r)$ such that $r \le n/2$ and
$$2 \sqrt{n-2(r-1)}+r-3 \le p \le n-r,$$
there exists a graph on $n$ vertices such that $(\pd(G),\reg(G)) = (p,r)$.

The paper is outlined as follows. Section \ref{sec.graph} collects basic notations and terminology of finite simple graphs that will be used in the paper. In Section \ref{sec.gapfree}, we start with a simple proof for the inequality (\ref{eq.tau}) when $G$ is a gap-free graph, giving the non-experts (in our case, the algebraic readers) a glimpse of why $2\sqrt{n}-2$ appears naturally in the bound for $\mc(G)$; see Theorem \ref{gap-free}. We also construct, for any given $n \in \NN$, a gap-free and chordal graph $G$ such that $\mc(G) = \lceil 2\sqrt{n}-2\rceil$, exhibiting that the inequality (\ref{eq.tau}) is sharp. Section \ref{sec.main} is devoted to a new proof of the inequality (\ref{eq.tau}) in the most general situation; see Theorem \ref{tulane}. Our proof is different from that given in \cite[Theorem 2.2]{CHLN} and provides structures that we can later on use in the classification for graphs where the equality is attained. Section \ref{sec.class} contains our main results of the paper. We classify graphs $G$ for which $\mc(G) = 2\sqrt{n}-2$, in Theorem \ref{classification}, and examine $(\pd(G), \reg(G))$ in this case, in Theorem \ref{thm.reg}. We finally show that when $\reg(G) = 1$, the spectrum of $\pd(G)$ is $[2\sqrt{n}-2, n-1] \cap \ZZ$, in Theorem~\ref{thm.spec}.

\begin{ack} The first author was supported by Louisiana BOR grant LEQSF(2017-19)-ENH-TR-25, and the second author was supported by JSPS KAKENHI 19H00637. The authors would like to thank Martin Milanic for informing us that the inequality (\ref{eq.tau}) was already known and drawing our attention to \cite{CHLN}. The authors would also like to thank Antonio Macchia for pointing out to us that $C_4$ should be included in our classification result.
\end{ack}


\section{Combinatorics of finite graphs and algebraic invariants} \label{sec.graph}
Throughout the paper $G$ shall denote a finite simple graph on $n \ge 2$ vertices that contains no isolated vertices. Recall that a finite graph $G$ is \emph{simple} if $G$ has no \emph{loops} nor \emph{multiple edges}. We shall use $V_G$ and $E_G$ to denote the vertex and edge sets of $G$, respectively.

\begin{defn} Let $G$ be a graph.
\begin{enumerate}
	\item A subset $W \subseteq V_G$ is called a \emph{vertex cover} of $G$ if $e \cap W \not= \emptyset$ for every edge $e \in E_G$. A vertex cover $W$ is \emph{minimal} if no proper subset of $W$ is also a vertex cover of $G$.  Set
	$$\mc(G) = \max\{ |W| ~\big|~ W \text{ is a minimal vertex cover of } G\}.$$
	\item A subset $W \subseteq V_G$ is called an \emph{independent set} if for any $u\not=v \in W$, $\{u,v\} \not\in E_G$.
\end{enumerate}
\end{defn}
\noindent It is easy to see that the complement of a vertex cover is an independent set. Particularly, the complement of a \textsc{maximum minimal vertex cover} is a \textsc{minimum maximal independent set}.

For a subset $W \subseteq V_G$, the \emph{induced subgraph} of $G$ over $W$ is the graph whose vertex set is $W$ and whose edge set is $\{\{u,v\} ~\big|~ u,v \in W \text{ and } \{u,v\} \in E_G\}$. A subset $M$ of $E_G$ is a \emph{matching} of $G$ if, for any $e \not= e'$ in $M$, one has $e \cap e' = \emptyset$.  The \emph{matching number} of $G$ is the largest size of a matching in $G$, and is denoted by $\beta(G)$. A matching $M$ of $G$ is called an \emph{induced matching} of $G$ if the induced subgraph of $G$ over $\bigcup_{e \in M} e$ has no edges other than those already in $M$.  The \emph{induced matching number} of $G$ is the largest size of an induced matching $G$, and is denoted by $\nu(G)$.  It is known from \cite{HVT, Ka} that $\nu(G) \leq \reg(G) \leq \beta(G)$.

\begin{defn} Let $G$ be a graph.
\begin{enumerate}
	\item $G$ is called \emph{gap-free} if $\nu(G) = 1$. Equivalently, $G$ is gap-free if for any two disjoint edges $e, f \in E_G$, there exists an edge $g \in E_G$ such that $e \cap g \not= \emptyset$ and $f \cap g \not=\emptyset$.
	\item $G$ is called \emph{chordal} if every cycle of length at least 4 in $G$ has a \emph{chord}. That is, for every cycle $C$ of length at least 4 in $G$, there exist two nonconsecutive vertices $u$ and $v$ on $C$ such that $\{u,v\} \in E_G$.
\end{enumerate}
\end{defn}
\noindent It is known from \cite{DS, HVT} that if $G$ is a chordal graph then $\pd(G) = \mc(G)$ and $\reg(G) = \nu(G)$.

\begin{defn}
	Let $W \subseteq V_G$ be a subset of the vertices in $G$. The \emph{neighborhood} (the set of \emph{neighbors}) and the \emph{closed neighborhood} of $W$ are defined by
	$$N_G(W) = \{u \in V_G ~\big|~ \exists w \in W: \{u,w\} \in E_G\} \text{ and } N_G[W] = N_G(W) \cup W.$$
\end{defn}
\noindent When $W = \{v\}$, for simplicity of notation, we shall write $N_G(v)$ and $N_G[v]$ in place of $N_G(W)$ and $N_G[W]$. The \emph{degree} of a vertex $v \in V_G$ is defined to be $\deg_G(v) = |N_G(v)|$. A vertex $v \in V_G$ is called \emph{free} if $\deg_G(v) = 1$. A \emph{leaf} in $G$ is an edge that contains a free vertex.

A \emph{path} is an alternating sequence of distinct vertices and edges (except possibly the first and the last vertex) $x_1, e_1, x_2, e_2, \dots, e_s, x_{s+1}$ such that $e_i = \{x_i, x_{i+1}\}$, for $i = 1, \dots, s$. The \emph{length} of a path is the number of edges on the path. A path of length $s$ is denoted by $P_s$. A \emph{cycle} is a closed path. An \emph{induced cycle} in $G$ is a cycle which is also an induced subgraph of $G$. An induced cycle of length $s$ is denoted by $C_s$. A \emph{complete} graph is a graph in which any two distinct vertices are connected by an edge. We shall use $K_s$ to denote a  complete graph over $s$ vertices. A graph $G$ is \emph{bipartite} if there is a partition $V_G = X \cup Y$ of the vertices of $G$ such that every edge in $G$ connects a vertex in $X$ to a vertex in $Y$. A bipartite graph $G$ with a bi-partition $V_G = X \cup Y$ of its vertices is called a \emph{complete bipartite} graph if $E_G = \{\{x,y\} ~\big|~ x\in X, y\in Y\}$. We use $K_{r,s}$ to denote the complete bipartite graph whose vertices are partitioned into the union of two sets of cardinality $r$ and $s$.

Finally, let $K$ be a field and let $S = K[V_G]$ represent the polynomial ring associated to the vertices in $G$.

\begin{defn}
	Let $M$ be a finitely generated graded $S$ module. Then $M$ admits a \emph{minimal graded free resolution} of the form
	$$0 \rightarrow \bigoplus_{j \in \ZZ}S(-j)^{\beta_{p,j}(M)} \rightarrow \dots \rightarrow \bigoplus_{j\in \ZZ}S(-j)^{\beta_{0,j}(M)} \rightarrow M \rightarrow 0.$$
	The number $\beta_{i,j}(M)$ are called the \emph{graded Betti numbers} of $M$. The \emph{projective dimension} and the \emph{regularity} of $M$ are defined as follows:
	$$\pd(M) = \max\{i ~\big|~ \exists j: \beta_{i,j}(M) \not= 0\} \text{ and } \reg(M) = \max\{j-i ~\big|~ \beta_{i,j}(M) \not= 0\}.$$
    The \emph{graded Betti table} of $M$ is an $\pd(M) \times \reg(M)$ array whose $(i,j)$-entry is $\beta_{i,i+j}(M)$. When $M = S/I(G)$, we write $\pd(G)$ and $\reg(G)$ in place of $\pd(S/I(G))$ and $\reg(S/I(G))$.
\end{defn}


\section{Gap-free graphs and $\lceil 2\sqrt{n} - 2 \rceil$} \label{sec.gapfree}
The goal of this section is to provide the non-experts with an easy understanding of why $2\sqrt{n}-2$ appears naturally in the bound for $\mc(G)$. This is demonstrated by a short proof of the inequality (\ref{eq.tau}) when $G$ is a gap-free graph. We shall also construct, for any $n \ge 2$, a graph $G$ over $n$ vertices admitting $\mc(G) = \lceil 2\sqrt{n}-2\rceil$, showing that the bound for $\mc(G)$ in (\ref{eq.tau}) is sharp. The graph $G$ constructed will be gap-free and chordal.

\begin{proposition}
\label{gap-free}
If a graph $G$ on $n$ vertices is gap-free, then
\[
\mc(G) \geq \lceil 2\sqrt{n} - 2 \rceil.
\]
\end{proposition}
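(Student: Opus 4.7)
The plan is to pick a vertex $v$ of maximum degree $d = \Delta(G)$ and combine two ingredients: a neighborhood-domination lemma that forces $n \le d^2+1$, together with an independent-set extension argument producing a minimal vertex cover of size at least $d + d'$ for a suitable secondary degree parameter~$d'$.

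First I would prove the key observation that every vertex $u \in V_G \setminus N_G[v]$ has a neighbor in $N_G(v)$. Since $G$ has no isolated vertex, $u$ has a neighbor $w$; if $w \notin N_G[v]$ then for each $z \in N_G(v)$ the edges $\{u,w\}$ and $\{v,z\}$ are vertex-disjoint, so the gap-free hypothesis forces a connecting edge, necessarily $\{u,z\}$ or $\{w,z\}$. If $\{u,z\} \notin E_G$ for every $z \in N_G(v)$, then $N_G(v) \subseteq N_G(w)$; maximality of $d$ forces $N_G(w) = N_G(v)$, and hence $u \in N_G(w) = N_G(v)$, a contradiction. A double count then yields $|V_G \setminus N_G[v]| \le \sum_{z \in N_G(v)}(\deg_G(z) - 1) \le d(d-1)$, so $n \le d^2+1$, and in particular $d \ge \sqrt{n-1}$.

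To convert this into a lower bound on $\mc(G)$, I would choose $v' \in V_G \setminus N_G[v]$ of maximum degree $d'$ inside the induced subgraph $G[V_G \setminus N_G[v]]$ and extend the independent pair $\{v,v'\}$ to a maximal independent set $I$. Since $I \subseteq \{v,v'\} \cup \bigl(V_G \setminus (N_G[v] \cup N_G[v'])\bigr)$ and $|N_G[v] \cup N_G[v']| \ge d + d' + 2$, we get $|I| \le n - d - d'$, and the complement $V_G \setminus I$ is a minimal vertex cover witnessing $\mc(G) \ge d + d'$. If $V_G \setminus N_G[v] = \emptyset$, then $V_G \setminus \{v\}$ is itself a minimal vertex cover of size $n-1 \ge \lceil 2\sqrt{n}-2 \rceil$; and if $G[V_G \setminus N_G[v]]$ contains an edge, then $d' \ge 1$ and one iterates the domination lemma inside $G[V_G \setminus N_G[v]]$ to close the bound together with $d \ge \sqrt{n-1}$.

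The main obstacle lies in the extremal case where $G[V_G \setminus N_G[v]]$ is edgeless, as occurs for the graphs $H_s$: here the two-vertex bound collapses to $\mc(G) \ge d$, and one must sharpen the counting to $|V_G \setminus N_G[v]| \le d^2/4$. This requires a finer use of gap-freeness: for two vertices $u, u' \in V_G \setminus N_G[v]$ with incomparable neighborhoods $N_G(u), N_G(u') \subseteq N_G(v)$, the disjoint-edge analysis forces the symmetric-difference pieces to be completely joined inside $G[N_G(v)]$, which, together with the degree cap $\deg_G(z) \le d$ for each $z \in N_G(v)$, squeezes $|V_G \setminus N_G[v]|$ down to at most $d^2/4$. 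Consequently $n \le (d/2+1)^2$, giving $d \ge 2\sqrt{n}-2$ and hence $\mc(G) \ge d \ge \lceil 2\sqrt{n}-2 \rceil$ in this final case as well.
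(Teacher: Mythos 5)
Your opening moves are correct and nicely executed: the domination lemma (every $u \notin N_G[v]$ has a neighbor in $N_G(v)$, for $v$ of maximum degree $d$) is a valid use of gap-freeness, the count $n \le d^2+1$ follows, and the bound $\mc(G) \ge d + d'$ via the complement of a maximal independent set containing $\{v,v'\}$ is sound (as is $\mc(G)\ge d$ when $G[V_G\setminus N_G[v]]$ is edgeless). But the two steps that are supposed to bridge from ``$\mc(G)\gtrsim \sqrt{n}$'' to ``$\mc(G)\ge 2\sqrt{n}-2$'' are genuine gaps. First, the case where $G[V_G\setminus N_G[v]]$ contains an edge is not actually handled: from $d \ge \sqrt{n-1}$ and $d' \ge 1$ you only get $\mc(G) \ge \sqrt{n-1}+1$, far short of $2\sqrt{n}-2$, and ``iterate the domination lemma inside $G[V_G\setminus N_G[v]]$'' is not an argument --- that induced subgraph may consist mostly of isolated vertices (as it does in the near-extremal examples), so the lemma does not apply there, and you never say what quantitative inequality the iteration is supposed to produce or how it combines with $d\ge\sqrt{n-1}$. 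The dangerous regime is precisely $\sqrt{n}\lesssim d\lesssim 2\sqrt{n}$ with a few edges inside $V_G\setminus N_G[v]$; ruling it out needs the same kind of fine structural counting as your edgeless case, not a one-line appeal to $d'\ge 1$.

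Second, the edgeless case rests entirely on the claim $|V_G\setminus N_G[v]| \le d^2/4$, which is exactly the heart of the matter (it is tight for $H_s$), and the justification offered is only a hint. The ingredients you name --- for incomparable neighborhoods $N_G(u),N_G(u')\subseteq N_G(v)$ the sets $N_G(u)\setminus N_G(u')$ and $N_G(u')\setminus N_G(u)$ are completely joined, plus $\deg_G(z)\le d$ --- do not straightforwardly give $d^2/4$: the obvious double count only gives $|V_G\setminus N_G[v]|\le d(d-1)$, and getting down to $d^2/4$ requires a real optimization over the family of neighborhoods (e.g., in the pendant case one must show the attachment vertices form a clique and then maximize $k(d-k)$, and the overlapping/non-pendant neighborhoods need separate treatment). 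The claim is plausibly true, but as written it is asserted, not proved. For comparison, the paper avoids this analysis altogether: it dominates the graph by a maximum clique $K_q$ (or an induced $C_5$ in the triangle-free non-bipartite case), observes that a minimal cover omitting one clique vertex $x_i$ must absorb all outside neighbors of $x_i$, and so gets $\mc(G)\ge \max_i\omega_i+(q-1)\ge n/q+q-2\ge 2\sqrt{n}-2$ by AM--GM --- a mechanism that delivers the full $2\sqrt{n}-2$ in one stroke, which is what your outline is still missing.
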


\begin{proof}
If $G$ is bipartite then $\mc(G) \geq n/2 \geq 2\sqrt{n} - 2$.  Thus, we can assume that $G$ is a non-bipartite graph. Since $G$ is gap-free, we can also assume that $G$ is a connected graph.

\noindent\textbf{Case 1:} $G$ contains a complete subgraph of size at least 3.  Let $q$ denote the maximum integer $q \geq 3$ for which $G$ contains a complete subgraph $K_q$.  Since $\mc(K_q) = q - 1 \geq 2\sqrt{q} - 2$, one can assume that $q < n$.  Without loss of generality, suppose that $\{x_1, \ldots, x_q\}$ is the vertex set of such a $K_q$ in $G$.

We claim that the complete subgraph $K_q$ of $G$ can be chosen such that any vertex $x_j$, for $q < j \leq n$, is connected by an edge to a vertex in this $K_q$. Indeed, suppose that this is not the case. Choose such a $K_q$ with the least number of vertices outside of $K_q$ that are not connected to any of the vertices of $K_q$. Let $x_j$, for some $q < j \leq n$, be a vertex outside of $K_q$ that is not connected to any of the vertices in $K_q$. Since $x_j$ is not an isolated vertex in $G$, there exists a vertex $x_k$, for $q < k \not=j \leq n$, such that $\{x_j, x_k\} \in E_G$. This, since $G$ is gap-free, implies that $x_k$ must be connected to at least $q - 1$ vertices of $K_q$.  In addition, it follows from the maximality of $q$ that $x_k$ must be connected to exactly $q - 1$ vertices of $K_q$.  Assume that $x_k$ is connected to $x_1, \ldots, x_{q - 1}$. Let $K_q'$ be the complete subgraph of $G$ over the vertices $\{x_1, \dots, x_{q-1}, x_k\}$.

Consider any vertex $x_l$ outside of $K_q'$ that is connected to a vertex in $K_q$. If $x_l$ is connected to any of the vertices $\{x_1, \dots, x_{q-1}\}$, then $x_l$ is still connected to that vertex in $K_q'$. If $\{x_l, x_q\} \in E_G$ and $\{x_l, x_k\} \not\in E_G$ then, by considering the pair of edges $\{x_l, x_q\}$ and $\{x_j,x_k\}$ and since $G$ is gap-free, we deduce that $\{x_l, x_j\} \in E_G$. This, again since $G$ is gap-free, implies that $x_l$ is connected to at least $q-2$ vertices among $\{x_1, \dots, x_{q-1}\}$. Thus, $x_l$ is connected to a vertex in $K_q'$ (since $q \ge 3$). Hence, the number of vertices outside $K_q'$ that are not connected to $K_q'$ is strictly less than that of $K_q$, a contradiction to the construction of $K_q$.

Now, suppose that each vertex $x_j$, for $q < j \leq n$, is connected to a vertex of $K_q$.  For $i =1, \dots, q$, let
$$W_i = \{j ~\big|~ q < j \leq n, \{x_j,x_i\} \in E_G\},$$
and set $\omega_i = |W_i|$ (note that the sets $W_i$s are not necessarily disjoint). It is easy to see that a minimal vertex cover of $G$ containing $\{x_1, \dots, x_q\} \setminus \{x_i\}$ must also contain the vertices in $W_i$. Thus, it follows that
\[
\mc(G) \geq \max\{\omega_1, \ldots, \omega_q \} + (q - 1).
\]
Therefore,
\[
\mc(G) \geq \frac{\, n - q \,}{q} + (q - 1) = \frac{\, n \,}{q} + q - 2 \geq 2\sqrt{n} - 2.
\]

\noindent\textbf{Case 2:} $G$ does not contain any complete subgraph of size at least 3. Since $G$ is not bipartite, $G$ contains an odd cycle of length $\ell \ge 5$. Since $G$ is gap-free, by considering pairs of non-adjacent edges on this cycle, we deduce that $G$ contains $C_5$ as an induced cycle. Let $x_1, \ldots, x_5$ be the vertices of this $C_5$ in $G$.

We claim that each vertex $x_i$, for $i > 5$, is connected by an edge to one of the vertices of $C_5$.  Indeed, suppose that there exists a vertex $x_i$, for some $i > 5$, that is not connected to any of the vertices of $C_5$. Since $G$ is connected, $G$ has an edge $\{x_i,x_j\}$ for some $j > 5$. Since $G$ is gap-free, either $x_1$ or $x_2$ must be connected to $x_j$. We can assume that $\{x_1,x_j\} \in E_G$. This, since $G$ has no triangle, implies that $\{x_2, x_j\} \not\in E_G$ and $\{x_5,x_j\} \not\in E_G$. For the same reason, at least one of the edges $\{x_3,x_j\}$ and $\{x_4,x_j\}$ is not in $G$. Suppose that $\{x_4, x_j\} \not\in E_G$.  We then have a gap consisting of the edges $\{x_4, x_5\}$ and $\{x_i, x_j\}$, a contradiction.

Now, for $i = 1, \dots, 5$, let $W_i = \{x_k ~\big|~ k> 5 \text{ and } \{x_i,x_k\} \in E_G\}$ and set $\omega_i = |W_i|$. Let $b_i = \omega_i+\omega_{i+2}$, where $\omega_{5+j} = \omega_j$.  Observe that a minimal vertex cover of $G$ not containing $x_i$ and $x_{i+1}$, for some $1 \le i \le 5$ must contain $W_i \cup W_{i+1}$. Since $b_1 + \cdots + b_5 \ge 2|\bigcup_{i=1}^5 W_i| = 2(n-5)$, it follows that there is $1 \leq i \leq 5$ with $b_i \geq 2n/5 -2$.  Therefore,
\[
\mc(G) \geq 2n/5 + 1 \geq 2\sqrt{n} - 2,
\]
and the result is proved.
\end{proof}

\begin{defn} \label{def.Hs}
Given $s \in \NN$, we define $H_s$ to be the graph consisting of a complete graph $K_s$, each of whose vertex is furthermore connected to an independent set of size $s-1$, and these independent sets are pairwise disjoint.
\end{defn}

\begin{center}
	\includegraphics[width=5cm, height=5cm]{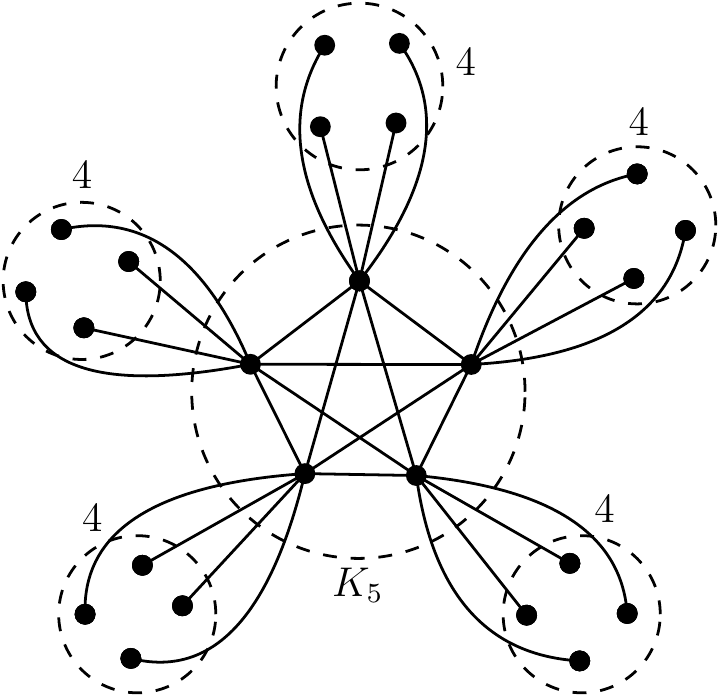}
	\captionof{figure}{$H_5$. \label{figHs}}
\end{center}
\begin{example} The graph depicted in Figure \ref{figHs} is $H_s$ for $s = 5$.
\end{example}

The following example gives a graph $G$ over $n$ vertices admitting $\mc(G) = \lceil 2\sqrt{n}-2\rceil$ for any $n \ge 2$.

\begin{example}
\label{ex.equality}
Let $a > 0$ be an integer with $a^2 \leq n < (a + 1)^2$.  If $n = a^2$ then let $G_{n} = H_a$. The first graph in Figure \ref{fig.G} is $G_{25} = H_5$. It is easy to see that $\mc(G_n) = 2(a-1) = 2\sqrt{n}-2.$

If $a^2 < n \leq a^2 + a$ then let $G_n$ be the graph obtained from $H_a$ by adding a leaf $\{x_i, x_{a^2 + i}\}$ to each vertex $x_i$, for $i=1, \dots, n-a^2$, in the complete subgraph $K_a$ of $H_a$. The second graph in Figure \ref{fig.G} is $G_{27}$.  Then $\mc(G_n) = 2a - 1$.  Since $a < \sqrt{n} \leq \sqrt{a^2 + a} < a + 1/2$, one has $\lceil 2\sqrt{n} - 2 \rceil = 2a - 1 = \mc(G_n)$.

If $a^2 + a < n < (a + 1)^2$ then let $G_n$ be the graph obtained from $G_{a^2+a}$ by adding a leaf $\{x_i, x_{a^2 + a + i}\}$ to each vertex $x_i$, for $i = 1, \dots, n-a^2-a$, in the complete subgraph $K_a$ of $G_{a^2+a}$. The third graph in Figure \ref{fig.G} is $G_{31}$. Then $\mc(G_n) = 2a$.  Since $a + 1/2 < \sqrt{a^2 + a + 1} \leq \sqrt{n} < a + 1$, one has $\lceil 2\sqrt{n} - 2 \rceil = 2a = \mc(G_n)$.

\begin{center}
	\includegraphics[width=4cm, height=4cm]{figHs.pdf} \hspace*{9pt} \includegraphics[width=4cm, height=4cm]{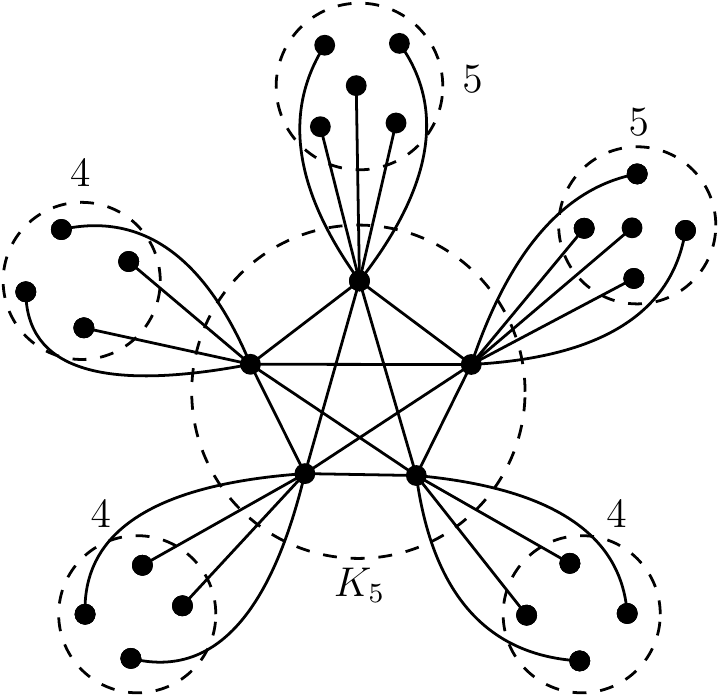} \hspace*{9pt} \includegraphics[width=4cm, height=4cm]{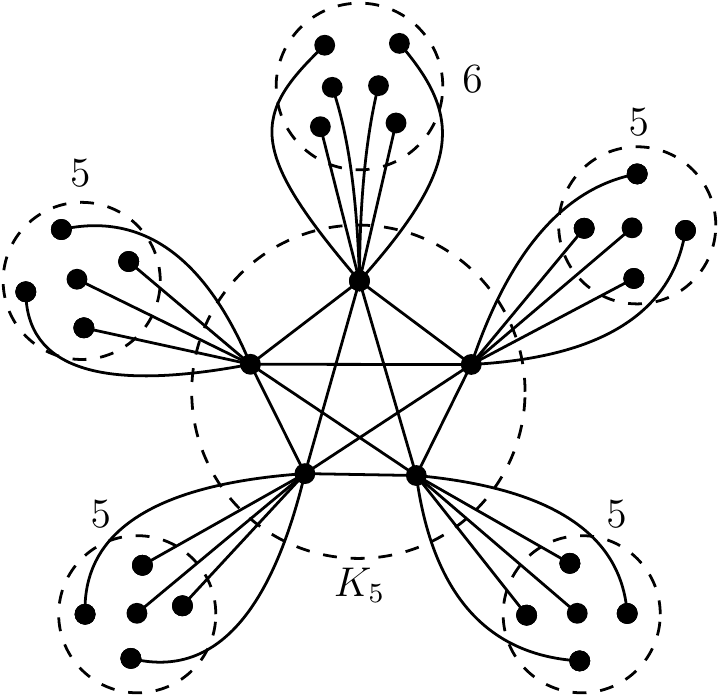}
	\captionof{figure}{Graphs with $\mc(G) = \lceil 2\sqrt{n}-2\rceil.$ \label{fig.G}}
\end{center}

Note that all graphs constructed here are chordal and gap-free.
\end{example}


\section{The bound $\mc(G) \geq 2\sqrt{n} - 2$ for an arbitrary graph} \label{sec.main}
In this section, we give a new proof for the inequality (\ref{eq.tau}). Our proof is different from that given in \cite[Theorem 2.2]{CHLN} and provides information that we could use in the next section to classify graphs for which (\ref{eq.tau}) becomes an equality. 

\begin{thm}
\label{tulane}
Let $G$ be a graph on $n$ vertices.  We have
{\em
\[
\mc(G) \geq \lceil 2\sqrt{n} - 2 \rceil.
\]
}
\end{thm}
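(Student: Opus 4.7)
The plan is to prove the theorem by strong induction on $n = |V_G|$, verifying the base cases (say $n \leq 6$) directly. For the inductive step, select a vertex $v \in V_G$ of maximum degree $d$.

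A preliminary observation worth isolating is that $\mc(G) \geq d$: starting from the vertex cover $V_G \setminus \{v\}$ and iteratively deleting non-essential vertices produces a minimal vertex cover $W$ with $v \notin W$, in which every $u \in N(v)$ is forced to remain, essential via the edge $\{u,v\}$. This handles the case $d \geq \lceil 2\sqrt{n}-2 \rceil$ at once.

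Assume henceforth $d < \lceil 2\sqrt{n}-2\rceil$. I would partition $V_G \setminus N[v]$ into the ``trapped'' set $Z = \{w : N_G(w) \subseteq N(v)\}$ and its complement $V' = V_G \setminus (N[v] \cup Z)$, with $|Z| = z$ and $|V'| = m = n - d - 1 - z$. A short check shows no edge of $G$ joins $Z$ to $V'$, and therefore every vertex of $V'$ has a neighbor in $V'$, so $G[V']$ has no isolated vertex. Two complementary constructions should carry the argument. When $z$ is small, the minimal vertex cover $N(v) \cup W'$ (where $W'$ is a minimal vertex cover of $G[V']$ supplied by the inductive hypothesis; its vertices remain essential in $G$ because their witness edges in $G[V']$ are still witness edges in $G$) has size at least $d + \lceil 2\sqrt{m}-2\rceil$, and the identity $2(\sqrt{n}-\sqrt{m}) = 2(n-m)/(\sqrt{n}+\sqrt{m})$ converts this into the desired $\geq \lceil 2\sqrt{n}-2\rceil$. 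When $z$ is large, I would instead use $\{v\} \cup Z_1 \cup C \cup W'$, where $C$ is a minimal vertex cover of $G[N(v)]$ whose complementary maximal independent set $I = N(v) \setminus C$ is built around a pivot $u_0 \in N(v)$ maximizing $|N_G(u_0) \cap Z|$, and $Z_1 = \{z \in Z : N_G(z) \cap I \neq \emptyset\}$; minimality follows from the maximality of $I$ (giving each $u \in C$ a witness in $I$), from the definition of $Z_1$ (giving each $z \in Z_1$ a witness in $I$), and from minimality of $W'$ in $G[V']$.

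The main obstacle is to show that, taken together, these two constructions beat $\lceil 2\sqrt{n}-2\rceil$ in every $(d, z, m)$-regime. The size of the second construction is $1 + |Z_1| + (d - |I|) + |W'|$, and the averaging bound $\max_{u_0} |N_G(u_0) \cap Z| \geq z/d$, the algebraic identity
\[
\left(d + \tfrac{z}{d} + 2\right)^2 - 4(1+d+z) = \left(d - \tfrac{z}{d}\right)^2 + \tfrac{4z}{d} \geq 0,
\]
and the subadditivity $\sqrt{1+d+z} + \sqrt{m} \geq \sqrt{n}$ combine to push this past the target. The genuine difficulty is the case $|I| > 1$: here one has to trade off the decrease in $d - |I|$ against a corresponding increase in $|Z_1|$, a balance in the same spirit as the $\tfrac{n}{q} + q - 2 \geq 2\sqrt{n}-2$ step in Case~1 of the gap-free Proposition~\ref{gap-free}.
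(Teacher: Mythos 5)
Your plan has two genuine gaps, one structural and one quantitative. Structurally, your second construction $\{v\} \cup Z_1 \cup C \cup W'$ need not be a vertex cover of $G$: a vertex $w \in V'$ may also have neighbors inside $N(v)$, and an edge $\{u,w\}$ with $u \in I = N(v)\setminus C$ and $w \in V' \setminus W'$ has neither endpoint in your set ($u \notin C\cup Z_1\cup\{v\}\cup W'$ and $w \notin \{v\}\cup Z_1 \cup C \cup W'$). Your partition only guarantees there are no $Z$--$V'$ edges; it does not prevent $I$--$V'$ edges, so the construction must be repaired before minimality or size can even be discussed, and any repair (e.g.\ forcing $W'$ to absorb the $V'$-endpoints of such edges) changes both the minimality witnesses and the count you rely on.

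Quantitatively, even where the constructions are legitimate, the inequalities you cite do not reach the target. Your chain gives, at best, $\mc(G) \ge \bigl(2\sqrt{1+d+z}-2\bigr) + \bigl(2\sqrt{m}-2\bigr)$, and combining this with $\sqrt{1+d+z}+\sqrt{m} \ge \sqrt{n}$ yields only $2\sqrt{n}-4$; the inequality $2\sqrt{a}-2+2\sqrt{b}-2 \ge 2\sqrt{a+b}-2$ is false in general (it amounts to $2\sqrt{ab} \ge 2\sqrt{a+b}+1$, which fails whenever one of $a,b$ is small), so splitting $n$ into two parts and adding square-root bounds structurally loses an additive constant. Recovering it is exactly the hard part, and you explicitly leave open the $|I|>1$ trade-off as well, so the induction does not close. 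This is where the paper's argument differs in kind: it is not inductive and never splits $n$ into two independently bounded pieces. It fixes one minimal vertex cover $W$ of maximum size, bounds the ``private'' neighborhoods $M(w_i)$ of the distinguished vertices by exhibiting competitor minimal covers (Claim~1, which uses the maximality of $|W|$), and then closes with a single global AM--GM step of the form $4n \le 4(t+1)(b+d+1) \le (\mc(G)+2)^2$, which is what preserves the full $2\sqrt{n}-2$ without the loss your additive combination incurs. To salvage your approach you would need either a strengthened inductive hypothesis (stronger than $\mc \ge 2\sqrt{m}-2$ on the part $V'$) or a single count that couples $d$, $z$ and $m$ multiplicatively, in the spirit of the paper's factorization.
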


\begin{proof} Since $\mc(G) \in \NN$, it suffices to show that $\mc(G) \ge 2\sqrt{n}-2$. Let $W$ be a minimal vertex cover of maximum size in $G$. That is, $|W| = \mc(G)$. We partition $W$ into the following two subsets
$$A' = \{w \in W ~\big|~ \deg_G(w) \ge 2\} \text{ and } B' = \{w \in W ~\big|~ \deg_G(w) = 1\}.$$
It is clear that if $A' = \emptyset$ then $G$ consists of isolated edges, and so $\mc(G) = n/2 \ge 2\sqrt{n}-2$. Thus, we shall assume that $A' \not= \emptyset$.
	
Let $B = B' \cup \{v \in A' ~|~ N_G(v) \subseteq N_G(B')\}$ and let $A = A' \setminus B$. We now have a new partition of $W$, namely, $W = A \cup B$. Note that $N_G(B') = N_G(B)$. For each vertex $v \in A$, let $M(v) = N_G(v) \setminus (W \cup N_G(B))$. Set $a = |A|$, $b = |B|$ and $n_b = |N_G(B)| \le b$.
	
Consider the maximal sets (with respect to inclusion) of the form $M(w)$ for $w \in A$, and suppose that those maximal sets are $M(w_1), \dots, M(w_t)$, for $w_1, \dots, w_t \in A$. Set $D = A \setminus \{w_1, \dots, w_t\}$ and let $d = |D|$.

\noindent\textbf{Claim 1.} For any $i = 1, \dots, t$, we have $|M(w_i)| \le b+1+d-n_b.$

\noindent\textit{Proof of Claim.} Let
$$D' =  \{w \in D ~\big|~ M(w) \not\subseteq M(w_i)\} \cup \{w \in D ~\big|~ ww_i \in E_G\}$$
and let $D'' = D \setminus D'$. Let $H$ be the induced subgraph of $G$ over $D''$ and let $U$ be a minimal vertex cover of $H$. Let
$$W' = \big[W \cup M(w_i)\cup N_G(B)\big] \setminus \big[B \cup \{w_i\} \cup (D'' \setminus U)].$$
It suffices to show that $W'$ is a minimal vertex cover of $G$, which then implies that $|W'| \le |W|$; that is,
\begin{align}
|M(w_i)| \le b+1+|D'' \setminus U|-n_b \le b+1+d-n_b. \label{eq.mvi}
\end{align}

To see that $W'$ is a vertex cover of $G$, consider any edge $e = xy \in E_G$. Since $W$ covers $G$, without loss of generality, we may assume that $x \in W$. If $x \in W'$ then $W'$ covers $e$. Assume that $x \not\in W'$. This implies that $x \in B \cup \{w_i\} \cup (D'' \setminus U)$. If $x \in B$ then $y \in N_G(B) \subseteq W'$, and so $W'$ covers $e$. If $x = w_i$ then either $y \in M(w_i) \subseteq W'$ or $y \in W \cup N_G(B)$. Furthermore, if $y \in W$ either $y$ is among the $w_j$, for $j \not= i$, or $y \in D' \subseteq W'$. Thus, in this case, $W'$ also covers $e$. If $x \in D'' \setminus U$ then, by definition, $M(x) \subseteq M(w_i)$ and $xw_i \not\in E_G$. This implies that at least one of the following happens:
\begin{enumerate}
\item $y \in M(x)$,
\item $y \in \{w_1, \dots, w_t\} \setminus \{w_i\}$,
\item $y \in N_G(B)$,
\item $y \in D'$,
\item $xy$ is an edge in $H$ (which forces $y \in U$).
\end{enumerate}
In any of these cases, we have $y \in W'$.

To see that $W'$ is a minimal vertex cover, consider any vertex cover $W'' \subseteq W'$ of $G$. Observe that $W''$ does not contain any vertex in $B \cup \{w_i\}$, so $W''$ must contain $N_G(B) \cup M(w_i)$. Also, for any $j \not= i$, $M(w_j) \not\subseteq M(w_i)$. This implies that $M(w_j) \not\subseteq W''$, which forces $w_j \in W''$ for all $j \not= i$. Furthermore, for any $w \in D'$, either $M(w) \not\subseteq W''$ or $ww_i \in E_G$. It then follows that $w \in W''$, i.e., $D' \subseteq W''$. Finally, for any vertex $u \in U$, since $U$ is a minimal vertex cover of $H$, there exists an edge $uv$ in $H$ such that $v \not\in W'$. This implies that $v \not\in W''$, and so, $u \in W''$. Hence, $W'' = W'$, and $W'$ is a minimal vertex cover of $G$. $\blacksquare$

We proceed with the proof of our theorem by considering two different cases.
	
\noindent\textbf{Case 1:} $B \not= \emptyset$. In this case, Claim 1 gives us
\begin{align}
n & = \sum_{i=1}^t |M(w_i)| +|A| + |N_G(B)| + |B| \label{eq.nb} \\
& \le t(b+1+d-n_b) + t+d+n_b+b \nonumber \\
& = 2t + t(b+d-n_b) + d+n_b+b \nonumber \\
& = 2t + (t-1)(b+d-n_b) + 2(b+d). \nonumber
\end{align}
On the other hand $\mc(G) = t+d + b$.

Observe that, since $b \ge 1$, we have $n_b \ge 1$, and so
\begin{align}
4n & \le 4\big[2t + (t-1)(b+d-1) + 2(b+d)\big] \label{eq.nb'}\\
& = 4(t+1)(b+d+1) \le (t+d+b+ 2)^2 \nonumber \\
& = (\mc(G)+2)^2. \nonumber
\end{align}
Hence, $\mc(G) \ge 2 \sqrt{n}-2$, and we are done.

\noindent\textbf{Case 2:} $B = \emptyset$. Observe that, by Claim 1, for each $i = 1, \dots, t$,
\begin{align}
|M(w_i)| \le d+1. \label{eq.mv2}
\end{align}
Observe further that if $D = \emptyset$ then it follows from (\ref{eq.mv2}) that $\mc(G) \ge t \ge n/2 \ge 2\sqrt{n}-2$. We shall assume that $D \not= \emptyset$. Since $W$ is a minimal vertex cover of $G$, it follows that $M(v) \not= \emptyset$ for all $v \in A$. Let $M(D) = \bigcup_{w \in D}M(w) \not= \emptyset$.

To prove the assertion, it suffices to show that
\begin{align}
\big|\bigcup_{i=1}^t M(w_i)\big| \le td+1. \label{eq.sum}
\end{align}
This is because (\ref{eq.sum}) then gives
\begin{align}
n \le \big|\bigcup_{i=1}^t M(w_i)| + |A| \le td+1+t+d = (t+1)(d+1), \label{eq.nb0}
\end{align}
which implies that $4n \le (t+d+2)^2 = (\mc(G)+2)^2$.

To establish (\ref{eq.sum}), we partition $\{w_1, \dots, w_t\}$ into the following two subsets
$$V_1 = \{w_i ~\big|~ M(D) \subseteq M(w_i)\} \text{ and } V_2 = \{w_i ~\big|~ M(D) \not\subseteq M(w_i)\}.$$
Consider any $w_i \in V_2$. Since $M(D) \not\subseteq M(w_i)$, there exists a vertex $x \in D$ such that $M(x) \not\subseteq M(w_i)$. Now, apply the same proof as that for Claim 1, for the set $M(w_i)$, observing that $x \in D'$ in this case, and so $|D'' \setminus U| \le d-1$. This implies that $|M(w_i)| \le d$.

Observe, finally, that if $V_1 = \emptyset$ then we have $\big|\bigcup_{i=1}^t M(w_i)\big| = \big|\bigcup_{w_i \in V_2}M(w_i)\big| \le td$, and if $V_1 \not= \emptyset$ then we have
\begin{align}
\big|\bigcup_{i=1}^t M(w_i)\big| & \le \big|\bigcup_{w_i \in V_1}M(w_i)\big| + \big|\bigcup_{w_i \in V_2}M(w_i)| \label{eq.sumb0} \\
& \le |M(D)| + (d+1-|M(D)|)|V_1| + d|V_2| \nonumber \\
& = d(|V_1|+|V_2|) + 1 - (|M(D)|-1)(|V_1|-1) \nonumber \\
& \le td+1. \nonumber
\end{align}
The result is proved.
\end{proof}

\begin{corollary} \label{cor.pd}
	Let $G$ be a graph on $n$ vertices. We have
	$$\pd(G) \ge \lceil 2\sqrt{n}-2\rceil.$$
\end{corollary}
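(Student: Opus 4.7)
The plan is to derive the corollary directly from Theorem \ref{tulane} by chaining it with the standard algebraic inequality $\pd(G) \ge \mc(G)$. Once that inequality is in hand, the result follows immediately from
$$\pd(G) \ge \mc(G) \ge \lceil 2\sqrt{n}-2\rceil,$$
with no further work.

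To establish $\pd(G) \ge \mc(G)$, I would invoke the associated-prime description of minimal vertex covers. Since $I(G)$ is a squarefree monomial ideal, it is radical, so $\Ass(S/I(G)) = \Min(S/I(G))$. The minimal primes of $I(G)$ are exactly the ideals $P_W = (x_i \mid x_i \in W)$, as $W$ ranges over the minimal vertex covers of $G$, and $\height(P_W) = |W|$. Choosing $W$ with $|W| = \mc(G)$ and applying the textbook inequality $\depth(S/I(G)) \le \dim(S/P)$ for $P \in \Ass(S/I(G))$ (see, e.g., Bruns--Herzog, Proposition 1.2.13), we obtain $\depth(S/I(G)) \le n - \mc(G)$. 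The Auslander--Buchsbaum formula, applicable since $S$ is a polynomial ring, then yields
$$\pd(G) = n - \depth(S/I(G)) \ge \mc(G),$$
as desired.

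There is no genuine obstacle here: the corollary is a one-line consequence of Theorem \ref{tulane} once the bound $\pd(G) \ge \mc(G)$ is recorded, and this latter inequality is a routine consequence of the squarefree structure of $I(G)$ together with Auslander--Buchsbaum. The only subtle point, worth noting explicitly in the write-up, is that the bound uses not just the (ordinary) height of $I(G)$ but its \emph{big height}, which for a squarefree monomial ideal coincides with $\mc(G)$.
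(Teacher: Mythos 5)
Your argument is correct, but it reaches the key inequality $\pd(G) \ge \mc(G)$ by a different route than the paper. The paper proves it via Alexander duality: by Terai's theorem, $\pd(G) = \reg(I(G)^\vee)$, and since the minimal generators of the cover ideal $I(G)^\vee$ correspond to the minimal vertex covers of $G$, the regularity is at least the maximal generating degree $\mc(G)$. You instead argue through associated primes and depth: the minimal primes of $I(G)$ are the prime ideals generated by minimal vertex covers, the prime of height $\mc(G)$ is associated (here you do not even need $\Ass = \Min$ --- minimal primes are always associated, so the appeal to radicality is harmless but superfluous), the inequality $\depth(S/I(G)) \le \dim(S/P)$ for $P \in \Ass(S/I(G))$ gives $\depth(S/I(G)) \le n - \mc(G)$, and Auslander--Buchsbaum converts this to $\pd(G) \ge \mc(G)$; equivalently, you are using that $\pd$ is bounded below by the big height, which for an edge ideal equals $\mc(G)$. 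Both proofs then conclude by citing Theorem \ref{tulane}. Your route is somewhat more elementary (no Alexander duality or Terai's formula, only standard facts about associated primes plus Auslander--Buchsbaum) and is in the same spirit as the depth argument the paper itself uses later in Theorem \ref{thm.spec} to get the upper bound $\pd(G) \le n-1$; the paper's route is shorter given Terai's theorem and makes the link between covers and the dual ideal explicit, which is the viewpoint cited from \cite{DS,K}.
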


\begin{proof} Let $I(G)^\vee$ denote the Alexander dual of the edge ideal $I(G)$ of $G$. See, for example, \cite[Chapter 5]{MS} for more details of the Alexander duality theory. By a result of Terai \cite[Theorem 2.1]{Terai}, we have
	$$\reg(I(G)^\vee) = \pd(G).$$
Observe that the minimal generators of $I(G)^\vee$ correspond to the minimal vertex covers in $G$. Since the regularity is an upper bound for the maximal generating degree, we have $\reg(I(G)^\vee) \ge \mc(G)$. Thus, $\pd(G) \ge \mc(G)$ (see also \cite{DS,K}). The assertion now follows from Theorem \ref{tulane}.
\end{proof}


\section{Classification for $\mc(G) = 2\sqrt{n}-2$ and the spectrum of $(\pd(G), \reg(G))$} \label{sec.class}
This section is devoted to our main results. We shall classify graphs $G$, when $n$ is a perfect square, for which $\mc(G)$ attains its minimum value; that is, when $\mc(G) = 2\sqrt{n}-2$. We shall also give the first nontrivial partial answer to Question \ref{question} on the spectrum of pairs of integers $(\pd(G), \reg(G))$. Recall that, for $s \in \NN$, $H_s$ is the graph defined in Definition \ref{def.Hs}.

\begin{thm}
\label{classification}
Let $G$ be a graph on $n$ vertices and suppose that $n$ is a perfect square. Then $\mc(G) = 2\sqrt{n}-2$ if and only if $G$ is either $2K_2$, $C_4$ or $H_s$, for some $s \in \NN$.
\end{thm}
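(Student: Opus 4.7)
The plan proceeds by proving the two directions separately. For the "if" direction, I would verify directly that each of the three graphs achieves $\mc = 2\sqrt{n}-2$: every minimal vertex cover of $2K_2$ has size $2 = 2\sqrt{4}-2$, and similarly for $C_4$; for $H_s$, a maximum minimal vertex cover is obtained by choosing $s-1$ vertices of the central $K_s$ together with the $s-1$ leaves attached to the omitted vertex, giving size $2s-2 = 2\sqrt{s^2}-2$.

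For the "only if" direction, suppose $\mc(G) = 2m-2$ and $n = m^2$. Let $W$ be a maximum minimal vertex cover and adopt the notation $A', B', A, B, D, t, d, b, n_b, M(v)$ from the proof of Theorem \ref{tulane}. I first dispose of the case $A' = \emptyset$: every $W$-vertex is free and every vertex in $V_G \setminus W$ has all its neighbors in $W$, so $G$ is a disjoint union of stars centered at the vertices of $V_G \setminus W$; the relations $\sum_u \deg_G(u) = 2m-2$, $|V_G \setminus W| = m^2 - (2m-2)$, and $\deg_G(u) \geq 1$ give $(m-2)^2 \leq 0$, whence $m = 2$ and $G = 2K_2$. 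When $A' \neq \emptyset$, I follow the case split of Theorem \ref{tulane}. In Case $B \neq \emptyset$, tightness in the chain leading to $4n \leq (\mc(G)+2)^2$ forces $n_b = 1$, $t = b+d = m-1$, pairwise disjointness and common size $b+d$ of the sets $M(w_i)$, and the equality version of Claim~1 (so $D' = \emptyset$ and the induced subgraph on $D$ is edgeless for each $i$). When $t \ge 2$, these conditions together give $M(v) = \emptyset$ for every $v \in D$, and the absence of edges from $v$ to the $w_i$'s, to $B$-vertices, and to $D$-vertices then contradicts $\deg_G(v) \ge 2$; hence $D = \emptyset$ and $b = t = m-1$. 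Two swap arguments pin down the remaining edges: if some $w_i u \notin E_G$ then $(W \setminus \{w_i\}) \cup M(w_i)$ is a minimal vertex cover of size $|W| + m - 2 > |W|$, contradiction; if some $w_i w_j \notin E_G$ then, after removing the now-redundant free vertices of $B$, the set $(W \setminus (\{w_i, w_j\} \cup B)) \cup M(w_i) \cup M(w_j) \cup \{u\}$ is a minimal vertex cover of size $|W| + m - 2$. Hence $\{w_1, \ldots, w_{m-1}, u\}$ induces $K_m$, and a final step showing $N_G(y) = \{w_i\}$ for each $y \in M(w_i)$ identifies $G$ with $H_m$. The residual sub-case $m = 2$, $t = 1$ is handled directly and gives $G = H_2 = P_4$.

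In Case $B = \emptyset$ (still with $A' \neq \emptyset$), I split on whether $D$ is empty. If $D = \emptyset$, then $|M(w_i)| \leq 1$ for all $i$ gives $n \leq 2t = 2\mc(G)$, and equality forces $m = 2$ and $G = P_4 = H_2$. If $D \neq \emptyset$, AM-GM equality in $(t+1)(d+1) = m^2$ with $t+d = 2m-2$ forces $t = d = m-1$, and equality in (\ref{eq.sumb0}) requires either $|V_1| = 1$ or $|M(D)| = 1$; in either sub-case, the equality form of Claim~1 applied to both $V_1$- and $V_2$-elements, together with the independence of $D$ and $\deg_G(v) \geq 2$ for $v \in D$, produces a contradiction when $m \geq 3$, while $m = 2$ yields $G = C_4$ by direct inspection. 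The main obstacle will be verifying that the swap sets in Case $B \neq \emptyset$ are genuinely minimal vertex covers (not merely covers): the second swap requires one to first identify and delete the now-redundant free vertices of $B$ before minimality can be established, and the hypothesis $m \geq 3$ is precisely what makes both swaps strict enlargements. The secondary difficulty lies in the $B = \emptyset, D \neq \emptyset$ sub-case for $m \geq 3$, which requires carefully tracking the modified Claim~1 bounds for elements of $V_2$.
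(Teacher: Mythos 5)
Your proposal is correct and follows essentially the same route as the paper: the same reuse of the notation and equality analysis of Claim~1 from Theorem~\ref{tulane}, the same case split on $B \neq \emptyset$ versus $B = \emptyset$ and on $D$, the same two swap arguments producing the cover $W'$ in the $B\neq\emptyset$ case, and the same endgame identifying $H_m$, $C_4$, $P_3=H_2$ and $2K_2$ (your explicit treatment of $A'=\emptyset$ and your ``strict enlargement for $m\ge 3$'' phrasing are only cosmetic variations of the paper's inequalities). One small point to tidy when writing it up: for $v\in D$ with $M(v)=\emptyset$, the remaining possible neighbor lies in $N_G(B)$ rather than in $B$ itself, and the contradiction with $\deg_G(v)\ge 2$ uses the tightness condition $n_b=1$ (the paper instead contradicts the construction of $B$).
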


\begin{proof} It is clear that if $G$ is either $2K_2$, $C_4$ or $H_s$ then $\mc(G) = \lceil 2\sqrt{n}-2\rceil$. We shall prove the other implication. Let $W$ be a minimal vertex cover of largest size. That is, $|W| = 2\sqrt{n}-2$. We shall use the same notations as in the proof of Theorem \ref{tulane}. Consider the following two possibilities.
	
\noindent\textbf{Case 1:} $B \not= \emptyset$. The proof of Theorem \ref{tulane} shows that $4n = (\mc(G)+2)^2$ only if the following conditions are satisfied:
	\begin{enumerate}
		\item[(1)] $t = b+d$ (due to (\ref{eq.nb'})),
		\item[(2)] $n_b = 1$ (due to (\ref{eq.nb'})), and
		\item[(3)] $M(w_1), \dots, M(w_t)$ are pairwise disjoint and each has exactly $b+1+d-n_b = b+d$ elements (due to (\ref{eq.nb})).
	\end{enumerate}

Condition (3), together with (\ref{eq.mvi}), implies that $|D''\setminus U| = |D|$. This happens if and only if $D'' = D$ and $U = \emptyset$. Thus, $D$ is an independent set, and for all $i = 1, \dots, t$ and $w \in D$, we have $M(w) \subseteq M(w_i)$ and $ww_i \not\in E_G$. This, together with condition (3) again, implies that either $t = 1$ or $M(w) = \emptyset$ for all $w \in D$.
	
Suppose that $t = 1$. Condition (1) then implies that $b = 1$ and $d = 0$. In this case, $G$ is either a path of length 3, i.e., $P_3$, or two disjoint edges, i.e., $2K_2$. Note that $P_3 = H_2$.
	
Suppose that $M(w) = \emptyset$ for all $w \in D$. Since $ww_i \not\in E_G$, it follows that $N_G(w) \subseteq N_G(B)$. This is a contradiction to the construction of $B$ unless $D = \emptyset$. Thus, we have $D = \emptyset$ and $A = \{w_1, \dots, w_t\}$. Let $v_b$ be the only vertex in $N_G(B)$.

Observe that if there exists an $i$ such that $w_iv_b \not\in E_G$, then let
	$$W' = [W \cup M(w_i)] \setminus \{w_i\}.$$
It can be seen that $W'$ is a minimal vertex cover of $G$. Thus, $|W'| \le |W|$, and so we must have $|M(w_i)| = 1$. That is, $b=1$ and $|M(w_j)| = 1$ for all $j = 1, \dots, t$. Therefore, $\mc(G) = n/2$. In this case, $\mc(G) = 2\sqrt{n}-2$ only if $n = 4$, and we have $t=1$ and $G = 2K_2$.
\begin{center}
\includegraphics[width=5cm, height=5cm]{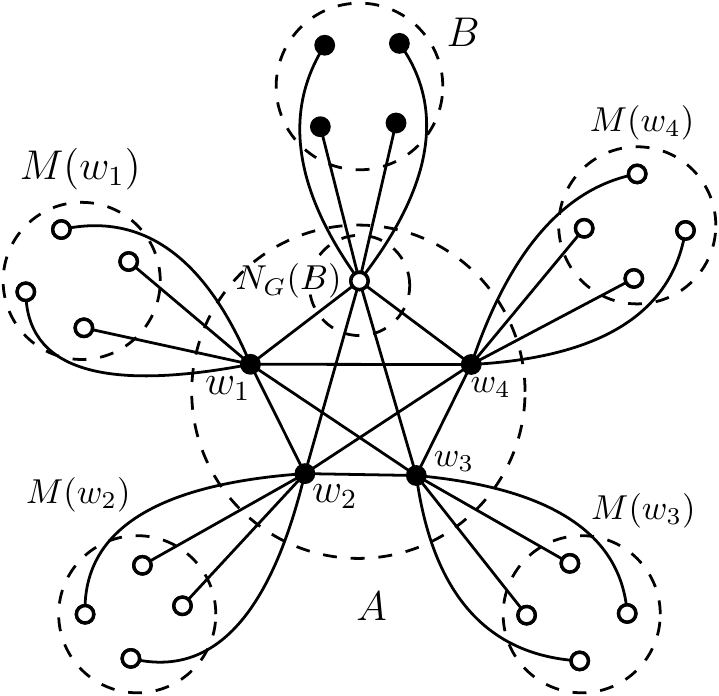}
\captionof{figure}{$G$ when $B \not= \emptyset$.\label{fig.HT}}
\end{center}

Assume that $w_iv_b \in E_G$ for all $i = 1, \dots, t$. Observe further that if there are $i \not= j$ such that $w_iw_j \not\in E_G$ then let
	$$W' = [W \cup M(w_i) \cup M(w_j) \cup N_G(B)] \setminus [B \cup \{w_i,w_j\}].$$
It can also be seen that $W'$ is a minimal vertex cover of $G$. Thus, $|W'| \le |W|$, and we get $|M(w_i)| + |M(w_j)| + 1 \le b+2$. That is, $2b \le b+1$. Therefore, $b=1$ and again $n = 4$. In this case, $G = P_3$.
	
Suppose, finally, that $w_iw_j \in E_G$ for all $i \not= j$. Clearly, we then have $G = H_{t+1}$, as depicted in Figure \ref{fig.HT} with $t = 4$.
	
\noindent\textbf{Case 2:} $B = \emptyset$. From the minimality of $W$, it follows that $M(w) \not= \emptyset$ for all $w \in A$. Suppose that $D = \emptyset$. Then (\ref{eq.mv2}) implies that $|M(w_i)| = 1$ for all $i = 1, \dots, t$. Thus, $\mc(G) \ge n/2$, and so $\mc(G) = 2\sqrt{n}-2$ only if $n=4$. Therefore, we also have that $G$ is either $P_3$ or $2K_2$.

Suppose that $D \not= \emptyset$. The proof of Theorem \ref{tulane} shows that $4n = (\mc(G)+2)^2$ only if the following conditions are satisfied:
	\begin{enumerate}
		\item[(4)] $t = d \not= 0$ (due to (\ref{eq.nb0})),
		\item[(5)] $M(w_j)$'s are all disjoint for $w_j \in V_2$, $M(w_i)$ and $M(w_j)$ are disjoint for any $w_i \in V_1$ and $j \in V_2$, and $M(w_i)$'s pairwise share exactly $M(D)$ as the set of common vertices (due to (\ref{eq.sumb0})), and
        \item[(6)] either $|M(D)| = 1$ or $|V_1| = 1$ (due to (\ref{eq.sumb0})).
	\end{enumerate}
	
Consider first the case where $|V_1| = 1$ in condition (6). Without loss of generality, we may assume that $V_1 =\{w_1\}$ and $V_2 = \{w_2, \dots, w_t\}$. In this case, condition (5) states that $M(w_1), \dots, M(w_t)$ are disjoint, $|M(w_1)| = d+1$ and $|M(w_j)| = d$ for $j \ge 2$. Particularly, for all $j \ge 2$,
\begin{align}
M(D) \cap M(w_j) = \emptyset. \label{eq.MD}
\end{align}
	
Applying (\ref{eq.mvi}) to $M(w_1)$ implies that $D$ is an independent set in $G$. Moreover, applying (\ref{eq.mvi}) to $M(w_j)$, for any $j \ge 2$, gives that $|D''| = d-1$. It follows that, for any $j \ge 2$, there is exactly one vertex $w$ in $D$ such that $M(w) \not\subseteq M(w_j)$ or $ww_j \in E_G$. This and (\ref{eq.MD}) force $d = 1$. In this case, $G$ is either a $P_3$ or a $C_4$.
	
Consider now the case where $|M(D)| = 1$. Let $v_d$ be the only vertex in $M(D)$. In this case, we have $M(w) = M(D) = \{v_d\}$ for all $w \in D$. If $V_2 \not= \emptyset$ then let $v \in V_2$. By condition (6) and applying (\ref{eq.mvi}) to $M(v)$, we deduce that $|D'' \setminus U| = d-1$. However, $M(w) \not\subseteq M(v)$ for all $w \in D$ by (\ref{eq.MD}). Thus, in applying (\ref{eq.mvi}) to estimate $M(v)$, we have $D'' = \emptyset$. This is the case only if $d = 1$. Thus, $t=d=1$, and we get to a contradiction to the fact that both $V_1$ and $V_2$ are not empty.
	
Suppose that $V_2 = \emptyset$. Condition (6) states that $M(w_1), \dots, M(w_t)$ pairwise have exactly one vertex $v_d$ in common and each is of size exactly $d+1$.
If there are $w_i$ and $w \in D$ such that $ww_i \in E_G$ then, in applying (\ref{eq.mvi}) to $M(w_i)$, we have that $D' \not= \emptyset$. That is $|D''| \le d-1$, and so $|M(w_i)| \le d$, a contradiction. Hence, $ww_i \not\in E_G$ for all $i$ and all $w \in D$. This shows that the vertices in $D$ are of degree 1. That is, $B \not= \emptyset$, a contradiction.
\end{proof}



Observe that $H_s$ is a chordal and gap-free graph. The conclusion of Theorem \ref{classification} is no longer true if $n$ is not a perfect square. In fact, for any odd integer $p \ge 3$, there exists a graph $G$ over $n = (p+1)^2/4+1$ vertices that is neither chordal nor gap-free and admits $\mc(G) = p = \lceil 2\sqrt{n}-2\rceil$. The following example depicts this scenario when $p = 5$ and $n=10$. The example for any odd $p \ge 3$ and $n = (p+1)^2/4+1$ is constructed in a similar manner.

\begin{example} \label{ex.notchordal}
Let $G$ be the following graph over $10$ vertices (as in Figure \ref{fig.10}). It is easy to see that $\mc(G) = 5 = \lceil 2\sqrt{10}-2\rceil$ (the solid black vertices form a minimal vertex cover of maximum cardinality 5). Furthermore, $G$ is neither chordal nor gap-free.
\begin{center}
\includegraphics[width=3.5cm, height=3.5cm]{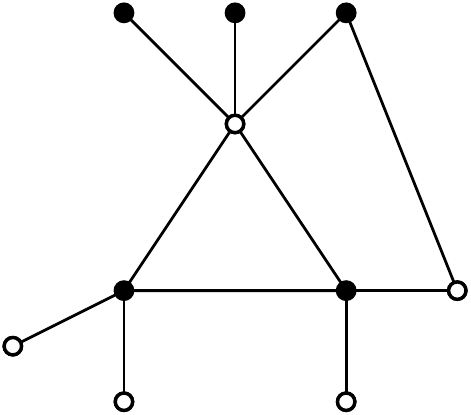}
\captionof{figure}{A graph with $\mc(G) = \lceil 2\sqrt{n}-2\rceil$ which is not chordal nor gap-free. \label{fig.10}}
\end{center}
\end{example}

The following problem, which we hope to come back to in future work, arises naturally.

\begin{problem}
	Characterize all graphs $G$ on $n$ vertices for which $\mc(G) = \lceil 2\sqrt{n}-2\rceil.$
\end{problem}

Theorem \ref{classification} furthermore gives us some initial understanding toward Question \ref{question}.

\begin{thm} \label{thm.reg}
	Let $G$ be a graph on $n$ vertices and suppose that $n$ is a perfect square. If $\mc(G) = 2\sqrt{n}-2$ then we must have either
	\begin{enumerate}
		\item $G = 2K_2$ and $(\pd(G), \reg(G)) = (2,2)$, or
		\item $G = C_4$ and $(\pd(G), \reg(G)) = (3,1)$, or
		\item $G = H_s$, for some $s \in \NN$, and $(\pd(G),\reg(G)) = (2\sqrt{n}-2,1)$.
	\end{enumerate}
\end{thm}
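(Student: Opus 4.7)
The strategy is to combine Theorem \ref{classification} with a short case-by-case computation of Betti invariants. Theorem \ref{classification} tells us that, under the hypothesis $\mc(G) = 2\sqrt{n}-2$ with $n = s^2$ a perfect square, the graph $G$ must be exactly one of $2K_2$, $C_4$, or $H_s$. So it remains only to compute $(\pd(G), \reg(G))$ for each of these three graphs.

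For $G = 2K_2$, the edge ideal $I(G) = (x_1x_2, x_3x_4)$ is a complete intersection of two quadrics in disjoint sets of variables. The Koszul complex of this regular sequence is the minimal free resolution of $S/I(G)$; it has length $2$ with top-degree shift $4$, so one reads off $\pd(S/I(G)) = 2$ and $\reg(S/I(G)) = 2$.

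For $G = C_4$, the plan is to pass through Alexander duality. The only minimal vertex covers of $C_4$ are the two independent pairs $\{x_1,x_3\}$ and $\{x_2,x_4\}$, so $I(C_4)^{\vee} = (x_1x_3,\, x_2x_4)$ is itself a complete intersection of two quadrics and has regularity $3$. Terai's identity $\pd(S/I(G)) = \reg(I(G)^{\vee})$, already invoked in the proof of Corollary \ref{cor.pd}, then gives $\pd(C_4) = 3$. For the regularity, I would apply Fr\"oberg's theorem: the complement of $C_4$ is $2K_2$, which is vacuously chordal, hence $I(C_4)$ has a linear resolution and $\reg(S/I(C_4)) = 1$.

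For $G = H_s$, the plan is to verify that $H_s$ is both chordal and gap-free and then invoke the formulas $\pd = \mc$ and $\reg = \nu$ recalled in Section \ref{sec.graph}. The graph $H_s$ is chordal because every cycle of $H_s$ must lie entirely inside the central $K_s$ (each attached leaf vertex has degree one and cannot participate in any cycle), and $K_s$ is trivially chordal. It is gap-free because any two disjoint edges of $H_s$ involve at least two distinct vertices of the central $K_s$, which are joined by an edge of $K_s$ that bridges the gap; hence $\nu(H_s) = 1$. The chordal formulas then yield $\pd(H_s) = \mc(H_s) = 2(s-1) = 2\sqrt{n}-2$ and $\reg(H_s) = 1$. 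No step poses a real obstacle once Theorem \ref{classification} is in hand; the most delicate piece is the $C_4$ case, which, being non-chordal, requires Fr\"oberg's theorem (or an explicit Betti computation) rather than the chordal formulas.
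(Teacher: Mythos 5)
Your proposal is correct and follows essentially the same route as the paper: reduce to the three graphs $2K_2$, $C_4$, $H_s$ via Theorem \ref{classification}, then compute $(\pd(G),\reg(G))$ case by case, using chordality and gap-freeness of $H_s$ to get $\pd = \mc = 2\sqrt{n}-2$ and $\reg = \nu = 1$. The paper merely states the values for $2K_2$ and $C_4$ without proof, so your Koszul-complex computation and the Terai--Fr\"oberg argument for $C_4$ are just explicit (and correct) fillers rather than a different approach.
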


\begin{proof} It follows from Theorem \ref{classification} that $G$ either $2K_2$, $C_4$ or $H_s$, for some $s \in \NN$. If $G$ is $2K_2$ then $(\pd(G), \reg(G)) = (2,2).$ If $G$ is $C_4$ then $(\pd(G), \reg(G)) = (3,1)$.
	
Suppose that $G = H_s$ for some $s \in \NN$. Recall that $H_s$ is a chordal and gap-free graph. Particularly, the induced matching number of $H_s$ is 1. Thus, by \cite[Corollary 6.9]{HVT}, we have $\reg(G) =  1$.
\end{proof}

Our next main result serves as a converse to Theorem \ref{thm.reg}; that is, we identify the spectrum of $\pd(G)$ when $\reg(G) = 1$.

\begin{thm} \label{thm.spec}
	Let $n \ge 2$ be any integer. The spectrum of $\pd(G)$ for all graphs $G$, for which $\reg(G) = 1$, is precisely $[2\sqrt{n}-2, n-1] \cap \ZZ$.
\end{thm}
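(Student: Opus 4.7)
The lower bound $\pd(G) \ge \lceil 2\sqrt{n}-2 \rceil$ is immediate from Corollary \ref{cor.pd}. The upper bound $\pd(G) \le n-1$ follows from the Auslander--Buchsbaum formula: because $G$ has no isolated vertices, the full vertex set $V_G$ is never a minimal vertex cover, hence $\mathfrak{m} \notin \Ass(S/I(G))$ and $\depth(S/I(G)) \ge 1$. Both extremes are attainable with $\reg(G)=1$: the graph $K_n$ has $\reg(K_n)=1$ (its complement is edgeless, hence chordal) and $\pd(K_n)=n-1$, while Theorem \ref{thm.reg} provides $H_s$ at the other endpoint when $n=s^2$. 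It remains to realize every intermediate integer $q \in [\lceil 2\sqrt{n}-2 \rceil, n-1]$.

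My plan is to construct, for each such $q$, a chordal graph consisting of a clique with pendant leaves. Given $q$, I first produce an integer $p \in [1,q]$ with $p(q-p+2) \ge n$. The quadratic $f(p) = -p^2 + (q+2)p$ attains its maximum $(q+2)^2/4$ at $p^{\ast} = (q+2)/2$, and the assumption $q \ge 2\sqrt{n}-2$ yields $(q+2)^2 \ge 4n$. If $q$ is even, $p^{\ast}$ is an integer and $f(p^{\ast}) \ge n$; if $q$ is odd, $(q+2)^2$ is odd and therefore at least $4n+1$, so $f(\lfloor p^{\ast}\rfloor)=((q+2)^2-1)/4 \ge n$. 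Set $r := q-p+1 \ge 1$ and choose $r_1,\ldots,r_p \in \ZZ_{\ge 0}$ with $\max_i r_i = r$ and $\sum_i r_i = n-p$; this is feasible precisely because $r \le n-p \le pr$ (the first inequality is $q \le n-1$, the second is our choice of $p$). Let $G$ be the graph obtained from a complete graph $K_p$ on vertices $v_1,\ldots,v_p$ by attaching $r_i$ new pendant vertices to $v_i$ for each $i$.

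It remains to verify that this $G$ satisfies $\pd(G)=q$ and $\reg(G)=1$. Since $K_p$ is chordal and pendant additions create no new cycle, $G$ is chordal, whence $\pd(G) = \mc(G)$ and $\reg(G) = \nu(G)$ by the results of \cite{DS, HVT} recalled in Section \ref{sec.graph}. For any pair of vertex-disjoint edges of $G$, the four endpoints must contain two distinct clique vertices $v_i, v_j$ (because every pendant has degree one), and the edge $v_i v_j$ of $K_p$ is then a chord, forcing $\nu(G)=1$ and hence $\reg(G)=1$. A direct inspection shows that the minimal vertex covers of $G$ are precisely $\{v_1,\ldots,v_p\}$ (minimal exactly when every $r_i \ge 1$) and, for each $i$, the cover $(\{v_1,\ldots,v_p\} \setminus \{v_i\}) \cup \{\text{pendants at } v_i\}$ of size $p-1+r_i$; indeed any vertex cover must contain at least $p-1$ clique vertices to cover the edges of $K_p$, and whenever $v_i$ is omitted all of its pendants must be included. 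Consequently $\mc(G) = p-1+\max_i r_i = p-1+r = q$, so $\pd(G) = q$.

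The only step that is not a routine chordal-graph computation is the arithmetic lemma producing the integer $p$, which is precisely where the hypothesis $q \ge 2\sqrt{n}-2$ enters. As a consistency check, at the boundary $q = 2\sqrt{n}-2$ with $n = s^2$ the construction forces $p = s$ and $r_i = s-1$ for every $i$, recovering $H_s$ in agreement with Theorem \ref{thm.reg}.
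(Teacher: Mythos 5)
Your proposal is correct and follows essentially the same route as the paper: the identical lower bound (Corollary \ref{cor.pd}) and depth/Auslander--Buchsbaum upper bound, and the same realization family --- a clique with pendant vertices attached (chordal and gap-free, so $\pd = \mc$ and $\reg = \nu = 1$) --- differing only in the bookkeeping used to choose the clique size and pendant distribution. Your quadratic argument for choosing $p$ and the explicit classification of minimal vertex covers replace the paper's choice of $s, T, t, a$, but the underlying construction and verification are the same.
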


\begin{proof} By Theorem \ref{tulane}, we have $\pd(G) \ge 2\sqrt{n}-2$. Observe that any minimal vertex cover of $G$ needs at most $n-1$ vertices, so $\mm = (x_1, \dots, x_n)$ is not a minimal primes of $I(G)$. Furthermore, since $I(G)$ is squarefree, it has no embedded primes. This implies that $\mm$ is not an associated prime of $I(G)$. It follows that $\depth S/I(G) \ge 1$. By Auslander-Buchsbaum formula, we then have $\pd(S/I(G)) \le n-1$.

It remains to construct a graph $G$ on $n$ vertices, for any given integer $p$ such that $2\sqrt{n}-2 \le p \le n-1$, for which $\pd(G) = p$ and $\reg(G) = 1$. By considering the complete bipartite graph $K_{1,n-1}$, the assertion is clearly true for $p = n-1$. Suppose now that $p \le n-2$.

Let $s = \lceil p/2\rceil + 1$ and $T = \lfloor p/2\rfloor + 1$. It can be seen that $sT = \lfloor (p+2)^2/4\rfloor \ge n$. Note further that $s+T = p+2 \le n$. Thus, we can choose $t$ to be the largest integer such that $(s-1)t+T \le n$ (particularly, $1 \le t \le T$), and set $a = (p+2)-(s+t) = T-t$.

Let $K_s$ be the complete graph over $s$ vertices $\{x_1, \dots, x_s\}$. For each $i = 1, \dots, s$, let $W_i$ be a set of $t-1$ independent vertices such that the sets $W_i$s are pairwise disjoint and disjoint from the vertices of $K_s$. For each $i = 1, \dots, s$, connect $x_i$ to all the vertices in $W_i$. Observe further that
\begin{enumerate}
	\item $st+a = (s-1)t+T \le n$, and
	\item $st + sa = sT \ge n$.
\end{enumerate}
Thus, we can find new pairwise disjoint sets $B_1, \dots, B_s$ of independent vertices, which are also disjoint from the vertices in $K_s$ and $W_i$s, such that $|B_1| = a$ and $|B_i| \le a$, for all $2 \le i \le s$, and $\sum_{i=1}^s |B_i| = n-st$.
For each $i=1, \dots, s$, connect $x_i$ to all the vertices in $B_i$. Let $G$ be the resulting graph.

\begin{center}
	\includegraphics[width=5cm,height=5cm]{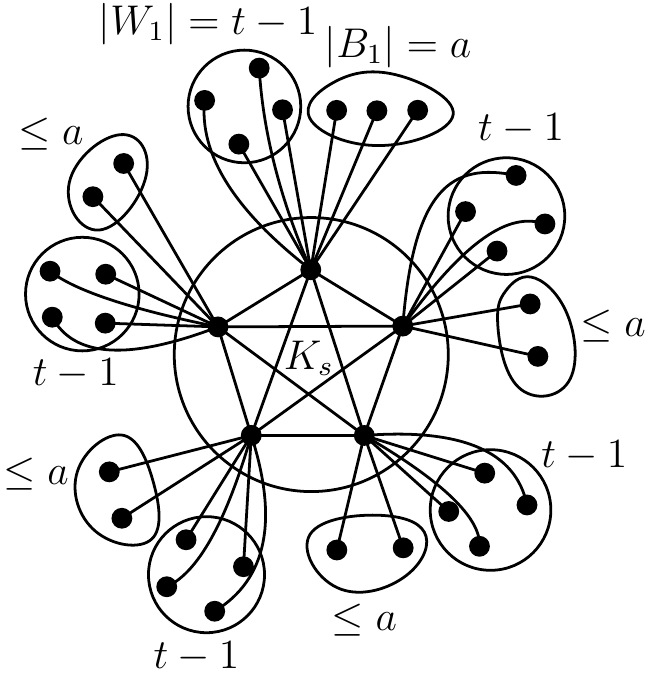}
	\captionof{figure}{A graph with $(\pd(G), \reg(G)) = (p,1).$ \label{fig.spec}}
\end{center}
It is easy to see that $G$ is a chordal and gap-free graph over $n$ vertices. It is also clear to see that $\mc(G) = (s-1)+(t-1)+a = p$. By \cite[Corollary 6.9]{HVT}, we have $\reg(G) = \nu(G) = 1$. Moreover, it follows from \cite[Theorem 3.2]{FVT} and \cite[Corollary 3.33]{MS} (see also \cite[Corollary 5.6]{DS}) that $\pd(G) = \mc(G) = p$.
\end{proof}

For simplicity of the statement of our next result, given an integer $n \ge 2$, let
$$\pdrSpec(n) = \{(p,r) ~\big|~ \text{there is a graph $G$ over $n$ vertices}: \pd(G) = p, \reg(G) = r\}.$$
Theorem \ref{thm.spec} basically states that $(p,1) \in \pdrSpec(n)$ for any integer $p$ with $2\sqrt{n}-2 \le p \le n-1$. The next corollary is an immediate consequence of Theorem \ref{thm.spec}.

\begin{corollary} \label{cor.spec}
	Let $r \le n/2$ be positive integers. We have
	$(p,r) \in \pdrSpec(n)$
	for any integer $p$ with
	$$2\sqrt{n-2(r-1)}+r-3 \le p \le n-r.$$
\end{corollary}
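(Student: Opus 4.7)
The plan is to reduce Corollary \ref{cor.spec} directly to Theorem \ref{thm.spec} by taking a disjoint union of graphs. Given $n$, $r$, and $p$ subject to the stated constraints, set $n_0 = n - 2(r-1)$ and $p_0 = p - (r-1)$. The hypothesis $r \le n/2$ guarantees $n_0 \ge 2$, the lower bound $p \ge 2\sqrt{n-2(r-1)} + r - 3$ translates to $p_0 \ge 2\sqrt{n_0} - 2$, and the upper bound $p \le n - r$ translates to $p_0 \le n_0 - 1$. Hence Theorem \ref{thm.spec} applied to $n_0$ and $p_0$ supplies a graph $G_0$ on $n_0$ vertices (with no isolated vertex) such that $\pd(G_0) = p_0$ and $\reg(G_0) = 1$. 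I would then define $G$ to be the disjoint union of $G_0$ with $r - 1$ copies of $K_2$; this graph has exactly $n_0 + 2(r-1) = n$ vertices, no isolated vertex, and is a legitimate candidate for $(p,r) \in \pdrSpec(n)$.

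The remaining step is to check that the disjoint-union construction adds both $\pd$ and $\reg$. If $G = G_1 \sqcup G_2$ and $S = S_1 \otimes_K S_2$ is the polynomial ring on the two disjoint vertex sets, then $S/I(G) \cong (S_1/I(G_1)) \otimes_K (S_2/I(G_2))$, so the Künneth formula gives
\[
\beta_{i,j}^S(S/I(G)) = \sum_{a+b=i,\; c+d=j} \beta_{a,c}^{S_1}(S_1/I(G_1))\, \beta_{b,d}^{S_2}(S_2/I(G_2)),
\]
from which $\pd(G) = \pd(G_1) + \pd(G_2)$ and $\reg(G) = \reg(G_1) + \reg(G_2)$ follow. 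Iterating this identity, together with $\pd(K_2) = \reg(K_2) = 1$, yields $\pd(G) = \pd(G_0) + (r-1) = p$ and $\reg(G) = \reg(G_0) + (r-1) = r$, as required.

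There is no real obstacle here; the only care needed is to verify that the three arithmetic inequalities on $n_0$ and $p_0$ are exactly what the hypotheses on $n$, $r$, $p$ provide, so that Theorem \ref{thm.spec} actually applies. If one prefers not to invoke Künneth, one can instead cite the well-known additivity $\pd(G_1 \sqcup G_2) = \pd(G_1) + \pd(G_2)$ and $\reg(G_1 \sqcup G_2) = \reg(G_1) + \reg(G_2)$ for edge ideals in disjoint variables (both are standard and appear, for instance, in the references already cited for Theorem \ref{thm.spec}).
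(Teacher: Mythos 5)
Your proposal is correct and follows the paper's own argument essentially verbatim: apply Theorem \ref{thm.spec} to $n-2(r-1)$ vertices and $p-(r-1)$, then take the disjoint union with $r-1$ disjoint edges and use additivity of $\pd$ and $\reg$ under disjoint union. The only difference is that you spell out the additivity via the K\"unneth formula, which the paper simply cites as a known fact.
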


\begin{proof} By Theorem \ref{thm.spec}, for any integer $p'$ such that
	$$2\sqrt{n-2(r-1)}-2 \le p' \le n-2(r-1)-1,$$
	there is a graph $H$ over $n-2(r-1)$ vertices for which $\pd(H) = p'$ and $\reg(H) = 1$. Let $H'$ be the graph consisting of $r-1$ disjoint edges. It is easy to see that $\pd(H') = r-1 = \reg(H')$.
	
	Let $G$ be the disjoint union between $H$ and $H'$. Since the projective dimension and regularity are additive with respect to disjoint unions of graphs, it follows that $\pd(G) = p'+(r-1)$ and $\reg(G) = 1+(r-1) = r$. The assertion is proved by taking $p' = p - (r-1)$.
\end{proof}

Note that $\lceil 2\sqrt{n-2(r-1)}\rceil+r-3 \ge \lceil 2\sqrt{n}\rceil -2$. That is,
$$\big[\lceil 2\sqrt{n-2(r-1)}\rceil+r-3, n-r\big] \subseteq \big[\lceil 2\sqrt{n}-2\rceil, n-1\big].$$
In other words, as $\reg(G)$ gets larger, the spectrum of $\pd(G)$ appears to become smaller. Further computation does suggest that this should be true.

\begin{conjecture}
	Let $r,n \ge 2$ be arbirary integers. If $(p,r) \in \pdrSpec(n)$ then $(p,r-1) \in \pdrSpec(n)$.
\end{conjecture}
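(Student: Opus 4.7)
The conjecture divides naturally by the value of $r$. When $r = 2$, it follows immediately from existing results in the paper: if $(p, 2) \in \pdrSpec(n)$, then by Corollary \ref{cor.pd} we have $p \ge \lceil 2\sqrt{n} - 2 \rceil$, while the general bound $\pd(G) \le n - 1$ established in the proof of Theorem \ref{thm.spec} gives $p \le n - 1$; Theorem \ref{thm.spec} then yields $(p, 1) \in \pdrSpec(n)$. So the content of the conjecture lies entirely in the case $r \ge 3$.

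For $r \ge 3$, my plan is to prove the conjecture indirectly through Corollary \ref{cor.spec}. If one can show that every pair $(p, r) \in \pdrSpec(n)$ satisfies
\[
2\sqrt{n - 2(r - 2)} + (r - 1) - 3 \;\le\; p \;\le\; n - (r - 1),
\]
then Corollary \ref{cor.spec} applied with regularity parameter $r - 1$ immediately yields $(p, r - 1) \in \pdrSpec(n)$. The upper bound $p \le n - (r - 1)$ should come from an inequality of the form $\pd(G) + \reg(G) \le n$, provable by combining the Auslander--Buchsbaum formula with a depth estimate $\depth(S/I(G)) \ge \reg(G)$; a natural route is to decompose $G$ into connected components and observe that each component with an edge contributes at least one to both the depth and the regularity. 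The lower bound on $p$ should come from a refinement of Theorem \ref{tulane} that incorporates the constraint $\reg(G) = r$: intuitively, an induced matching of size $r - 1$ accounts for $2(r - 1)$ vertices and $r - 1$ units of projective dimension, leaving a subgraph on roughly $n - 2(r - 1)$ vertices to which the unconstrained bound of Theorem \ref{tulane} applies.

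The main obstacle is the refined lower bound on $\pd(G)$ when $\reg(G) = r \ge 2$. The proof of Theorem \ref{tulane} is already delicate in the unconstrained setting, and building in the regularity constraint --- which bounds the induced matching number $\nu(G)$ but only indirectly controls the minimal vertex cover structure that drives the proof --- will likely require new combinatorial bookkeeping, in particular a careful tracking of how the sets $M(w_i)$ in the proof of Theorem \ref{tulane} interact with an induced matching of size close to $r$. Should this refined bound prove elusive, a fallback is direct graph modification: given $G$ realizing $(p, r)$, try to produce $G'$ on the same vertex set realizing $(p, r-1)$ by locating an induced matching edge and reconfiguring its endpoints to destroy one induced matching, compensating with a structural change elsewhere to preserve $\pd$. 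Controlling both invariants simultaneously under such a local surgery --- particularly in the regime where $p$ is close to the lower bound, where Theorem \ref{classification} shows the graph structure is very rigid --- is the technical heart of the problem.
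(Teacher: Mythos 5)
First, note that the statement you are trying to prove is stated in the paper only as a conjecture: the authors offer no proof, so there is no argument of theirs to compare yours against, and any complete proof would be new. Your observation for $r=2$ is correct and does follow from the paper's results: any graph on $n$ vertices satisfies $\lceil 2\sqrt{n}-2\rceil \le \pd(G) \le n-1$ (Corollary \ref{cor.pd} and the depth argument in the proof of Theorem \ref{thm.spec}), so if $(p,2)\in\pdrSpec(n)$ then $p$ lies in $[2\sqrt{n}-2,\,n-1]\cap\ZZ$ and Theorem \ref{thm.spec} gives $(p,1)\in\pdrSpec(n)$. So the $r=2$ case is settled, and the real content is $r\ge 3$ --- which is exactly where your plan breaks down.

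The fatal step is the proposed upper bound $\pd(G)+\reg(G)\le n$, i.e.\ $\depth(S/I(G))\ge\reg(S/I(G))$. This is false. Take $n=2r+1$ and let $G$ consist of $r$ disjoint edges $a_1b_1,\dots,a_rb_r$ together with one vertex $c$ joined to all $2r$ other vertices. The edges $a_ib_i$ form an induced matching of size $r$ and the matching number of $G$ is $r$, so $\reg(G)=r$ (using $\nu(G)\le\reg(G)\le\beta(G)$ as quoted in Section \ref{sec.graph}). On the other hand, the independence complex of $G$ is the disjoint union of the vertex $\{c\}$ and the independence complex of $rK_2$, hence is disconnected; Hochster's formula then gives $\beta_{n-1,n}(S/I(G))\ne 0$, so $\pd(G)=n-1$ and $\depth(S/I(G))=1$. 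Thus $\pd(G)+\reg(G)=3r>n$ for $r\ge 2$. (Your suggested proof via connected components only yields $\depth\ge c$ and $\reg\ge c$ where $c$ is the number of components, not $\depth\ge\reg$; the example above is connected.) This does more than invalidate one lemma: it shows the whole strategy of funnelling the $r\ge 3$ case through Corollary \ref{cor.spec} cannot work, because pairs such as $(n-1,r)\in\pdrSpec(n)$ with $r\ge 3$ exist, while Corollary \ref{cor.spec} with parameter $r-1$ only ever produces $p\le n-(r-1)<n-1$. So for such pairs one must exhibit a graph realizing $(n-1,r-1)$ by some other construction, and your fallback (local surgery preserving $\pd$ while dropping $\reg$ by one) is at present only a hope, with no mechanism offered for controlling both invariants. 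The refined lower bound you posit is likewise unproved, but the counterexample to the upper bound is already decisive: the conjecture remains open and your proposal does not close it.
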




\bibliographystyle{plain}

\begin{thebibliography}{99}

\bibitem{BdCP}
N.~Boria, F.~Della~Croce and V.~Th.~Paschos,
On the max min vertex cover problem,
Disc. Appl. Math. {\bf 196} (2015), 62--71.

\bibitem{BdCEP}
N. Bourgeois, F. Della Croce, B. Escoffier and V.Th. Paschos,
Fast algorithms for min independent dominating set,
Discrete Appl. Math. {\bf 161} (2013), 4-5, 558--572.

\bibitem{CHLN}
V. Costa, E. Haeusler, E.S. Laber and L. Nogueira,
A note on the size of minimal covers,
Info. Proc. Lett. {\bf 102} (2007), 124--126.

\bibitem{DS}
H.~Dao and J.~Schweig,
Projective dimension, graph domination parameters, and independence complex homology,
J. of Combin. Theory, Ser. A {\bf 120} (2013), 453--469.

\bibitem{FVT} C.A.~Francisco and A. Van Tuyl,
Sequentially Cohen-Macaulay edge ideals,
Proc. Amer. Math. Soc. {\bf 135} (2007), no. 8, 2327--2337.

\bibitem{GKL} S. Gaspers, D. Kratsch and M. Liedloff,
Exponential time algorithms for the minimum dominating set problem on some graph classes,
in: L. Arge and R. Freivalds (Eds.), Proc. Scandinavian Workshop on Algorithm Theory, SWAT’06, in: Lecture Notes in Computer Science, vol. 4059, Springer-Verlag, 2006, pp. 148--159.

\bibitem{GL}
S. Gaspers and M. Liedloff,
A branch-and-reduce algorithm for finding a minimum independent dominating set in graphs,
in: F.V. Fomin (Ed.), Proc.
International Workshop on Graph Theoretical Concepts in Computer Science, WG’06, in: Lecture Notes in Computer Science, vol. 4271, Springer-Verlag, 2006, pp. 78--89.

\bibitem{HVT}
H.~T.~H\`a and A.~Van~Tuyl,
Monomial ideals, edge ideals of hypergraphs, and their graded Betti numbers,
J. Algebraic Combin. {\bf 27} (2008), 215--245.

\bibitem{Hall} M.M. Halld\'orsson,
Approximating the minimum maximal independence number,
Inform. Process. Lett. {\bf 46} (1993) 169--172.

\bibitem{HHgtm260}
J.~Herzog and T.~Hibi,
Monomial ideals,
GTM 260, Springer, 2010.

\bibitem{Ka}
M.~Katzman,
Characteristic-independence of Betti numbers of graph ideals,
J. Combin. Theory Ser. A {\bf 113} (2006), 435--454.

\bibitem{K}
K.~Kimura,
Non-vanishingness of Betti numbers of edge ideals,
in: Harmony of Gr\"obner Bases and the Modern Industrial Society (T.~Hibi, Ed.), World Sci. Publ., 2012, pp. 153--168.

\bibitem{MS}
E.~Miller and B.~Sturmfels,
Combinatorial Commutative Algebra,
GTM 227, Springer, 2005.

\bibitem{MV} S.~Morey and R.H.~Villarreal,
Edge ideals: algebraic and combinatorial properties,
Progress in commutative algebra 1, 85--126, de Gruyter, Berlin, 2012.

\bibitem{Terai}
N.~Terai,
Alexander duality theorem and Stanley-Reisner rings,
Free resolutions of coordinate rings of projective varieties and related topics (Japanese) (Kyoto, 1998),
S\=urikaisekikenky\=usho K\=oky\=uroku 1078 (1999), 174--184.

\bibitem{V}
R.H.~Villarreal,
Cohen-Macaulay graphs,
Manuscripta Math.  66 (1990), 277--293.

\end{thebibliography}

\end{document}